\documentclass[12pt]{amsart}
\usepackage[usenames,dvipsnames]{xcolor}
\usepackage{amsmath,amssymb,amsfonts,amsthm,pgf,tikz, enumerate}
\usetikzlibrary{shapes,positioning,decorations}
\usetikzlibrary{decorations.pathreplacing}
\usepackage[bookmarks=true,pdfborder={0 0 0}]{hyperref}
\usepackage{soul}
\usepackage{graphicx}

\newtheorem{theorem}{Theorem}[section]
\newtheorem{lemma}[theorem]{Lemma}
\newtheorem{corollary}[theorem]{Corollary}

\newtheorem{proposition}[theorem]{Proposition}

\theoremstyle{definition}

\newtheorem{definition}[theorem]{Definition}

\theoremstyle{remark}

\numberwithin{equation}{section}

\DeclareMathOperator{\Dom}{Dom}

\usepackage{enumerate} 
\usepackage{bm}

\title[An Unbounded SISP for Infinite Graphs]{A Structured Inverse Spectrum Problem for Infinite Graphs and Unbounded Operators}
\author{Ehssan Khanmohammadi}
\address{Franklin \& Marshall College}
\email{ehssan@fandm.edu}

\begin{document}

\begin{abstract}
Given an infinite graph $G$ on countably many vertices, and a closed, infinite set $\Lambda$ of real numbers, we prove the existence of an unbounded self-adjoint operator whose graph is $G$ and whose spectrum is $\Lambda$. 

\end{abstract}

\maketitle   
\keywords{Keywords: Inverse Spectrum Problem, Infinite Graph, Unbounded Operator.
\\

\subjclass{AMS MSC: 05C50,  05C63, 15A29, 47A10.}

\section{Introduction}
The main theorem in 
\cite{hk16} states that given an infinite graph $G$ on countably many vertices, and a compact, infinite set $\Lambda$ of real numbers there is a real symmetric matrix whose graph is $G$ and whose spectrum is $\Lambda$. More precisely, one can construct a bounded self-adjoint operator $T$ on $\ell^2$ with spectrum $\Lambda$ such that the matrix of $T$ with respect to the standard basis of $\ell^2$ has the desired zero-nonzero pattern given by the graph $G$. Here $\ell^2$ is the short form for the Hilbert space of square-summable real sequences $\ell^2(\mathbb{N})$.

In this article we relax the compactness condition on $\Lambda$ in the result above by working with unbounded operators. Note that the spectrum of any bounded operator is a compact subset of the complex plane, so the main result of \cite{hk16} is in this sense optimal for bounded operators. Additionally, since the spectrum of any unbounded operator is a closed subset of the complex plane (see, for instance, \cite[Proposition 2.6]{schmudgen12}), a proper generalization of this result to the case of unbounded operators should replace the compactness assumption of $\Lambda$ by closedness. This is precisely what we accomplish in this paper.

Throughout, all vector spaces will be over the field of real numbers making inner products $\langle v, w\rangle$ linear in both $v$ and $w$. We denote the operator norm by $\|\cdot \|_{\text{op}}$.
\section{Preliminaries}
In this section we recall some definitions and establish a few basic results that we shall use later.
\begin{definition}
An \emph{unbounded} operator $T$ on a Hilbert space $\mathcal H$ is a linear map of some dense subspace $\Dom(T)\subset \mathcal H$ into $\mathcal H$.
\end{definition}
According to this definition, `unbounded' means `not necessarily bounded,' in the sense that we allow $\Dom(T)=\mathcal H$ if $T$ is bounded.

\begin{definition} Suppose $T$ is an unbounded operator on $\mathcal H$. Let $\Dom(T^*)$ be the space of all $v\in \mathcal H$ for which the linear functional 
	\[
	v\mapsto \langle v, Tw\rangle, \quad w\in \Dom(T),
	\]
is bounded. For $v\in \Dom(T^*)$, we define $T^*v$ to be the unique vector such that $\langle T^* v, w\rangle=\langle v, Tw\rangle$ for all $w\in \Dom(T)$.
\end{definition}

\begin{definition} An unbounded operator $T$ on $\mathcal H$ is
\begin{enumerate}[(1)]
\item \emph{symmetric} if $\langle v, Tw\rangle=\langle Tv, w\rangle$ for all $v, w\in \Dom(T)$, and in particular
\item \emph{self-adjoint} if $\Dom(T)=\Dom(T^*)$ and $T^*v=Tv$ for all $v$ in $\Dom(T)$.
\end{enumerate} 
It is easy to check that $T$ is symmetric if and only if $T^*$ is an \emph{extension} of $T$, that is, $\Dom(T)\subset\Dom(T^*)$ and $T=T^*$ on $\Dom(T)$.
\end{definition}

The following proposition, involving a `discrete version' of the potential energy operator in quantum mechanics, will play a key role in proving our main result. Indeed, the spectral theorem implies that this multiplication operator is the prototype of all self-adjoint operators. See, for instance, Chapters 9 and 10 in \cite{hall13}.
\begin{proposition}\label{P:potential operator is selfadjoint} Let $\{\lambda_n\}_{n=1}^\infty$ be any sequence of real numbers. Let $T$ be the unbounded operator on $\ell^2$ with domain
	\[
	\Dom(T)=
	\{
	\{a_n\}_{n=1}^\infty\in \ell^2\mid \{\lambda_n a_n\}_{n=1}^\infty\in \ell^2
	\}
	\]
such that $T$ maps $\{a_n\}_{n=1}^\infty\in \Dom(T)$ to $\{\lambda_na_n\}_{n=1}^\infty$. Then $T$ is self-adjoint.
\end{proposition}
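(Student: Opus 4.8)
The plan is to establish self-adjointness in three stages: first confirm that $\Dom(T)$ is dense so that $T^*$ is well defined, then verify that $T$ is symmetric (which gives $T\subseteq T^*$), and finally prove the reverse inclusion $\Dom(T^*)\subseteq\Dom(T)$ by testing the defining identity of $T^*$ against the standard basis vectors.

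First I would observe that each standard basis vector $e_k$ lies in $\Dom(T)$, since the sequence $\{\lambda_n(e_k)_n\}_{n=1}^\infty$ has a single nonzero entry and is therefore trivially square-summable. Because the finite linear combinations of the $e_k$ are dense in $\ell^2$, the domain $\Dom(T)$ is dense, and hence the adjoint $T^*$ is defined.

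Next, for $v=\{a_n\}_{n=1}^\infty$ and $w=\{b_n\}_{n=1}^\infty$ in $\Dom(T)$, I would expand
\[
\langle v, Tw\rangle=\sum_{n=1}^\infty a_n\lambda_n b_n=\langle Tv, w\rangle,
\]
where the rearrangement is legitimate because $\{a_n\}\in\ell^2$ and $\{\lambda_n b_n\}\in\ell^2$, so the series $\sum_{n=1}^\infty |\lambda_n a_n b_n|$ converges by the Cauchy--Schwarz inequality. This shows that $T$ is symmetric, and hence $T\subseteq T^*$.

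The crux is the reverse inclusion, and this is the only step I expect to require genuine care. Given $v=\{a_n\}_{n=1}^\infty\in\Dom(T^*)$, write $T^*v=\{c_n\}_{n=1}^\infty\in\ell^2$. Testing the identity $\langle T^*v, w\rangle=\langle v, Tw\rangle$ against the choice $w=e_k$ yields $c_k=\langle v, \lambda_k e_k\rangle=\lambda_k a_k$ for every $k$. Thus $\{\lambda_n a_n\}_{n=1}^\infty=\{c_n\}_{n=1}^\infty\in\ell^2$, which is precisely the condition defining $\Dom(T)$, so $v\in\Dom(T)$ and moreover $T^*v=Tv$. Combining the two inclusions gives $\Dom(T^*)=\Dom(T)$ together with $T^*=T$ on this common domain, so $T$ is self-adjoint. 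The one point deserving attention is this reverse inclusion: it is the use of the basis vectors $e_k$ as test functions that forces the sequence $\{\lambda_n a_n\}_{n=1}^\infty$ to be square-summable and thereby prevents $\Dom(T^*)$ from being strictly larger than $\Dom(T)$.
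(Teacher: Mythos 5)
Your proposal is correct and follows essentially the same route as the paper: density of the domain via finite sequences, symmetry from the reality of the $\lambda_n$, and the reverse inclusion $\Dom(T^*)\subseteq\Dom(T)$ obtained by identifying the coordinates of $T^*v$ with $\lambda_k a_k$. The only cosmetic difference is that you test directly against the basis vectors $e_k$, whereas the paper first invokes the Riesz representation theorem for the bounded functional and then reads off the same coordinate identity; your version is, if anything, slightly more streamlined.
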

\begin{proof} First, observe that $\Dom(T)$ contains all finite sequences and hence it is dense in $\ell^2$. Next, since $\{\lambda_n\}_{n=1}^\infty$ is a sequence of real numbers, $T$ is clearly symmetric and thus $T^*$ is an extension of $T$. It remains to show that $\Dom(T^*)\subset\Dom(T)$.

Suppose $\mathbf{b}\in \Dom(T^*)$ so that
	\[
	\mathbf{a}\mapsto \langle \mathbf{b}, T\mathbf{a}\rangle= \sum_{n=1}^\infty b_n\lambda_na_n,\quad \mathbf{a}\in \Dom(T),
	\]
is a bounded functional. This functional has a unique bounded extension to $\ell^2$ and therefore, by the Riesz representation theorem, it can be represented by a unique $\mathbf{c}\in \ell^2$. Thus,
	\[
	\sum_{n=1}^\infty b_n\lambda_na_n=\sum_{n=1}^\infty c_na_n
	\]
or 
	\[
	\sum_{n=1}^\infty (b_n\lambda_n-c_n)a_n=0
	\]
for all $\mathbf{a}\in \ell^2$. This immediately implies $b_n\lambda_n=c_n$ for all $n$ and hence $\mathbf{b}\in \Dom(T)$, yielding $\Dom(T^*)\subset\Dom(T)$.
\end{proof}
Let us recall the definition of the spectrum of an unbounded operator.
\begin{definition} Let $T$ be an unbounded operator on $\mathcal H$. A number $\lambda\in \mathbb{C}$ is in the \emph{resolvent set} of $T$ if there exists a bounded operator $S$ with the following properties: for all $v\in \mathcal H$, $Sv$ belongs to $\Dom(T)$ and $(T-\lambda I)S v=v$, and for all $w\in \Dom(T)$, $S(T-\lambda I) w=w$. 

The complement of the resolvent set of $T$ is called the \emph{spectrum} of $T$ and is denoted by $\sigma(T)$.
\end{definition}
For instance, one can easily check that the spectrum of the multiplication operator $T$ in Proposition~\ref{P:potential operator is selfadjoint} is the closure of $\{\lambda_n\mid n\in \mathbb{N}\}$ as a subset of the real line.
\begin{definition} A sequence $\{T_n\}_{n=1}^\infty$ of unbounded operators on a Hilbert space $\mathcal{H}$ is said to be \emph{convergent} to an unbounded operator $T$ if for each sufficiently large $n$, $T-T_n$ is bounded on $\Dom(T_n)\cap \Dom(T)$ and moreover $\|T-T_n\|_{\text{op}}\to 0$ as $n\to \infty$.
\end{definition}
\begin{lemma} \label{L:limit of self-adjoints}
Suppose $\{T_n\}_{n=1}^\infty$ is a sequence of self-adjoint operators that is convergent to an unbounded operator $T$ on a Hilbert space $\mathcal{H}$. Assume that $\Dom(T_n)=\mathcal{D}$ for all $n$, where $\mathcal{D}$ is some dense subspace of $\mathcal{H}$. Then $T$ is self-adjoint on $\mathcal{D}$.
\end{lemma}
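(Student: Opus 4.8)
The plan is to exhibit $T$ as a bounded self-adjoint perturbation of a single $T_N$ and then to appeal to the stability of self-adjointness under such perturbations. The whole difficulty lies in passing from the easy conclusion that $T$ is \emph{symmetric} to the genuine one that $T$ is \emph{self-adjoint}; the perturbation structure is exactly what bridges that gap.

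First I would extract from the convergence hypothesis the pointwise statement I need. Since $\Dom(T_n)=\mathcal D$, the common domain on which $T-T_n$ is assumed bounded is $\mathcal D\cap\Dom(T)$, and the stated conclusion presupposes $\Dom(T)=\mathcal D$; working on $\mathcal D$, the estimate $\|Tv-T_nv\|\le\|T-T_n\|_{\text{op}}\,\|v\|$ shows that $T_nv\to Tv$ for every $v\in\mathcal D$. Fixing any index $N$ for which $B:=T-T_N$ is bounded on $\mathcal D$ (one exists by hypothesis), the same limit gives, for $v,w\in\mathcal D$,
\[
\langle Bv,w\rangle=\lim_{n\to\infty}\langle (T_n-T_N)v,w\rangle=\lim_{n\to\infty}\langle v,(T_n-T_N)w\rangle=\langle v,Bw\rangle,
\]
since each $T_n-T_N$ is a difference of self-adjoint operators and hence symmetric. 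Thus $B$ is a bounded symmetric operator on the dense subspace $\mathcal D$, so it extends uniquely to a bounded self-adjoint operator $\tilde B$ on all of $\mathcal H$.

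With this in hand I would finish by writing $T=T_N+\tilde B$ on $\mathcal D$ and invoking the standard fact that adding a bounded self-adjoint operator to a self-adjoint one yields a self-adjoint operator with the same domain: because $\tilde B$ is everywhere defined and bounded, $(T_N+\tilde B)^*=T_N^*+\tilde B^*=T_N+\tilde B$, and the adjoint has domain $\Dom(T_N^*)=\mathcal D$. Hence $T$ is self-adjoint on $\mathcal D$.

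The step I expect to be the real obstacle is precisely this upgrade from symmetric to self-adjoint, i.e. the reverse inclusion $\Dom(T^*)\subseteq\Dom(T)$; the limiting argument by itself yields only symmetry, exactly as the computation in Proposition~\ref{P:potential operator is selfadjoint} shows symmetry to be the cheap half, and it is the structure $T=T_N+\tilde B$ that forces the domains to match. The justification of the perturbation identity $(T_N+\tilde B)^*=T_N^*+\tilde B^*$ for bounded $\tilde B$ stays entirely within the real Hilbert space, which is why I would favor it. Should one wish to avoid quoting that identity, an alternative is to verify self-adjointness through surjectivity of $(T_N+\tilde B)\pm i\mu I$ by a Neumann series once $|\mu|>\|\tilde B\|_{\text{op}}$, using the resolvent bound $\|(T_N\pm i\mu I)^{-1}\|_{\text{op}}\le 1/|\mu|$; but since the ambient field is $\mathbb{R}$, that route must be run on the complexification of $\mathcal H$. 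In all cases the norm control supplied by the convergence definition is used through the pointwise limit $T_nv\to Tv$, which is what makes $\tilde B$ symmetric.
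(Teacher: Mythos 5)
Your proof is correct, and for the symmetry half it coincides with the paper's (both pass the inner product through the limit $T_nv\to Tv$). Where you diverge is in the crucial reverse inclusion $\Dom(T^*)\subseteq\mathcal D$. You package the difference as a single bounded symmetric operator $B=T-T_N$ on $\mathcal D$, extend it to a bounded self-adjoint $\tilde B$ on $\mathcal H$, and quote the bounded-perturbation identity $(T_N+\tilde B)^*=T_N^*+\tilde B^*$, so that $\Dom(T^*)=\Dom(T_N^*)=\mathcal D$ falls out at once. The paper instead argues by hand: for $w\in\Dom(T^*)$ it supposes $v\mapsto\langle w,T_nv\rangle$ is unbounded on $\mathcal D$ for every $n$, extracts unit vectors $v_n$ with $|\langle w,T_nv_n\rangle|>n$, and uses $\bigl||\langle w,Tv_n\rangle|-|\langle w,T_nv_n\rangle|\bigr|\le\|w\|\,\|T-T_n\|_{\text{op}}\to 0$ to force $|\langle w,Tv_n\rangle|\to\infty$, contradicting $w\in\Dom(T^*)$; hence $w\in\Dom(T_n^*)=\mathcal D$ for some $n$. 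The two arguments rest on the same observation --- the functional $v\mapsto\langle w,Tv\rangle$ differs from $v\mapsto\langle w,T_Nv\rangle$ by a bounded functional --- but yours outsources the domain bookkeeping to a standard perturbation fact (one the paper itself invokes later, in the proof of Theorem~\ref{T:infinite bounded lambdasisp}, when it writes $T$ as $T_1$ plus a bounded self-adjoint operator), while the paper's version is self-contained and avoids quoting that identity. Your remark that the argument tacitly presupposes $\Dom(T)=\mathcal D$ is accurate, and the paper makes the same tacit assumption when it writes $\Dom(T^*)\supset\Dom(T)=\mathcal D$. Either route is sound; yours is arguably the cleaner one to cite, the paper's the more elementary one to read.
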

\begin{proof}
Clearly $T$ is symmetric, because each $T_n$ is symmetric on $\mathcal{D}$ and hence for all $v, w\in \mathcal{D}$,
	\[
	\langle w, T v\rangle =\lim_{n\to \infty} \langle w, T_n v\rangle=\lim_{n\to \infty} \langle T_n w, v\rangle= \langle Tw, v\rangle.
	\]
Thus, $T^*$ is an extension of $T$ and $\Dom(T^*)\supset \Dom(T)=\mathcal{D}$. 

Now let $w\in \Dom(T^*)$ so that $v\mapsto \langle w, Tv\rangle$ is bounded for $v\in \mathcal{D}$. We claim that $w\in \mathcal{D}$. This is clear if $v\mapsto \langle w, T_n v\rangle$ is bounded on $\mathcal{D}$ for some $n$, because in that case $w\in \Dom(T_n^*)=\Dom(T_n)=\mathcal{D}$. So, we assume that there exists a sequence of unit vectors $\{v_n\}_{n=1}^\infty$ in $\mathcal{D}$ such that $|\langle w, T_n v_n\rangle|>n$ for each $n$. Thus,
	\begin{align*}
	\Bigl||\langle w, T v_n\rangle|-|\langle w, T_n v_n\rangle|\Bigr|
	&\le
	|\langle w, T v_n\rangle-\langle w, T_n v_n\rangle|
	\\
	&\le
	\|w\| \|T-T_n\|_{\text{op}},
	\end{align*}
by an application of the reverse triangle inequality and the Cauchy--Schwarz inequality. The right side of the second inequality above tends to $0$ as $n$ goes to $\infty$ implying that $|\langle w, T v_n\rangle|\to \infty$. This is absurd; hence $w\in \mathcal{D}$. Therefore, $\Dom(T^*)\subset \mathcal{D}$ which finishes the proof that $T$ is self-adjoint on $\mathcal{D}$.
\end{proof}
Finally, to finish this section, we record a lemma whose easy proof we omit.
\begin{lemma}\label{L:sum of self-adjoints} Let $\mathcal{H}$ be a Hilbert space with an orthonormal basis $\mathfrak{B}$. Suppose $\mathfrak{B}_1$ and $\mathfrak{B}_2$ are two subsets of $\mathfrak{B}$ that partition $\mathfrak{B}$ and denote the Hilbert spaces generated by them by $\mathcal{H}_1$ and $\mathcal{H}_2$, respectively. If $T_i$ are unbounded self-adjoint operators on $\mathcal{H}_i$, for $i=1, 2$, then the operator $T$ defined by $T_1\oplus T_2$ is an unbounded self-adjoint operator on $\mathcal{H}$ with $\Dom(T)=\Dom(T_1)\oplus \Dom(T_2)$.
\end{lemma}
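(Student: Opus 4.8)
The plan is to verify the three defining properties of a self-adjoint operator in turn: that $\Dom(T)=\Dom(T_1)\oplus\Dom(T_2)$ is dense, that $T$ is symmetric, and that $\Dom(T^*)\subseteq\Dom(T)$. The whole argument rests on the observation that, since $\mathfrak{B}_1$ and $\mathfrak{B}_2$ partition the orthonormal basis $\mathfrak{B}$, the subspaces $\mathcal{H}_1$ and $\mathcal{H}_2$ are orthogonal and $\mathcal{H}=\mathcal{H}_1\oplus\mathcal{H}_2$; so every vector $v\in\mathcal{H}$ has a unique decomposition $v=v_1+v_2$ with $v_i\in\mathcal{H}_i$, and $\langle v_1, w_2\rangle=0$ whenever $v_1\in\mathcal{H}_1$ and $w_2\in\mathcal{H}_2$.

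Density is immediate: each $\Dom(T_i)$ is dense in $\mathcal{H}_i$ because $T_i$ is, in particular, densely defined, and the algebraic direct sum of dense subspaces of orthogonal factors is dense in the whole space. For symmetry, I would take $v=v_1+v_2$ and $w=w_1+w_2$ in $\Dom(T)$ and expand $\langle v, Tw\rangle=\langle v_1+v_2, T_1 w_1+T_2 w_2\rangle$; the two cross terms vanish by orthogonality, and the remaining diagonal terms equal $\langle T_1 v_1, w_1\rangle+\langle T_2 v_2, w_2\rangle=\langle Tv, w\rangle$ by the symmetry of $T_1$ and $T_2$. Thus $T^*$ extends $T$ and $\Dom(T)\subseteq\Dom(T^*)$.

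The only step with any content is the reverse inclusion $\Dom(T^*)\subseteq\Dom(T)$. I would take $w=w_1+w_2\in\Dom(T^*)$, so that $v\mapsto\langle w, Tv\rangle$ is bounded on $\Dom(T)$, and test this functional against vectors supported on a single factor. For $v\in\Dom(T_1)$, regarded as the element $v+0$ of $\Dom(T)$, we have $Tv=T_1 v\in\mathcal{H}_1$, whence $\langle w, Tv\rangle=\langle w_1, T_1 v\rangle$ because $w_2\perp\mathcal{H}_1$. Since $\|v+0\|=\|v\|$, this shows $v\mapsto\langle w_1, T_1 v\rangle$ is bounded on $\Dom(T_1)$, that is, $w_1\in\Dom(T_1^*)=\Dom(T_1)$; symmetrically $w_2\in\Dom(T_2)$. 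Hence $w\in\Dom(T_1)\oplus\Dom(T_2)=\Dom(T)$, and therefore $T$ is self-adjoint.

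I expect no real obstacle here: the self-adjointness of the $T_i$ is exactly what lets me replace $\Dom(T_i^*)$ by $\Dom(T_i)$ in the last step, so the proof reduces to routine bookkeeping with the orthogonal decomposition, which is presumably why the authors elected to omit it.
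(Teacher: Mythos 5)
Your proof is correct. The paper explicitly omits the proof of this lemma as easy, so there is nothing to compare against; your argument --- orthogonal decomposition of $\mathcal{H}$, symmetry by expanding and killing the cross terms, and restricting the bounded functional $v\mapsto\langle w,Tv\rangle$ to each factor to conclude $w_i\in\Dom(T_i^*)=\Dom(T_i)$ --- is the standard one and correctly supplies the omitted details.
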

\section{Main Theorem}
In preparation for our main result, now we introduce the notion of a graph of a symmetric matrix (or a self-adjoint operator). 
\begin{definition} Let $G$ be a (finite or infinite) graph whose vertices are indexed by $1, 2, \dots$. We say that $G$ is the \emph{graph of a real symmetric matrix} $A=[a_{ij}]$ if for any $i\ne j$, we have $a_{ij}\ne 0$ precisely when the vertices $i$ and $j$ are adjacent in $G$.

We say that $G$ is the \emph{graph of a self-adjoint operator} $T$ on $\ell^2$ if $G$ is the graph of the standard matrix of $T$.
\end{definition}
The following theorem is proved in \cite{hk16} using the so-called Jacobian method, and the interested reader may want to consult that article for the details of the Jacobian method and relevant references.
\begin{theorem} \label{finitelambdasiep}
Let $\{ \lambda_n\}_{n=1}^\infty$ be a sequence of distinct real numbers and suppose $\{G_n\}_{n=1}^\infty$ is a sequence such that for each $n\in \mathbb{N}$, $G_n$ is a graph on $n$ vertices and also a subgraph of $G_{n+1}$. Then for any sequence of positive numbers $\{\varepsilon_n\}_{n=1}^\infty$ we can find a sequence of symmetric matrices $\{A_n\}_{n=1}^\infty$ such that for any $n\in \mathbb{N}$,
\begin{enumerate}[(i)]
	\item $A_n$ has graph $G_n$ and spectrum $\{\lambda_1, \dots, \lambda_n\}$, and 
	\item 
	$
	\|A_{n}\oplus [\lambda_{n+1}]-A_{n+1}\|_{\textrm{op}}<\varepsilon_n.
	$\label{I: closeness condition}
\end{enumerate}
\end{theorem}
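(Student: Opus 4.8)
The plan is to build the matrices $A_n$ by induction on $n$, using the Jacobian method to pass from stage $n$ to stage $n+1$. For the base case I would take $A_1=[\lambda_1]$, which trivially has graph $G_1$ (a single vertex) and spectrum $\{\lambda_1\}$. For the inductive step, suppose $A_n$ has graph $G_n$ and spectrum $\{\lambda_1,\dots,\lambda_n\}$; since these $n$ values are distinct, $A_n$ has $n$ simple eigenvalues. The natural starting point is $B:=A_n\oplus[\lambda_{n+1}]$: because $\lambda_{n+1}$ differs from $\lambda_1,\dots,\lambda_n$, this $(n+1)\times(n+1)$ matrix already has the exact target spectrum $\{\lambda_1,\dots,\lambda_{n+1}\}$ with all eigenvalues simple, and its graph is $G_n$ together with an isolated vertex $n+1$, which is a subgraph of $G_{n+1}$. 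Thus the only task is to perturb $B$ slightly so as to switch on precisely the edges in $E(G_{n+1})\setminus E(G_n)$ while keeping the spectrum fixed; the smallness of the perturbation is exactly what secures condition~(ii).

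To carry out this switching, let $\mathcal S(G_{n+1})$ be the linear space of real symmetric $(n+1)\times(n+1)$ matrices whose off-diagonal support is contained in the edge set of $G_{n+1}$, and consider the map $\Phi\colon \mathcal S(G_{n+1})\to\mathbb R^{\,n+1}$ sending a matrix to the power sums $\tr M,\ \tr(M^2),\ \dots,\ \tr(M^{n+1})$ of its eigenvalues, which by Newton's identities determine the spectrum with multiplicity. Writing $B=\sum_j \lambda_j u_ju_j^{\mathsf T}$ in an orthonormal eigenbasis, first-order perturbation theory gives the derivative of the $j$-th eigenvalue in a symmetric direction $E$ as $u_j^{\mathsf T}Eu_j$, so $\Phi$ is a submersion at $B$ exactly when the functionals $E\mapsto u_j^{\mathsf T}Eu_j$ for $1\le j\le n+1$ are linearly independent on $\mathcal S(G_{n+1})$. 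Granting this, the implicit function theorem shows that $\Phi^{-1}(\Phi(B))$ is, near $B$, a smooth manifold $N$ of dimension $|E(G_{n+1})|$ consisting of matrices in $\mathcal S(G_{n+1})$ with the exact target spectrum and passing through $B$.

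It then remains to locate a point of $N$, arbitrarily close to $B$, whose graph is exactly $G_{n+1}$; this point will be $A_{n+1}$. The entries indexed by $E(G_n)$ are nonzero at $B$, hence remain nonzero nearby by continuity, so the real content is to move off the coordinate subspaces $\{M_{pq}=0\}$ for the new edges $(p,q)\in E(G_{n+1})\setminus E(G_n)$. For each such edge this is possible provided the extraction functional $E\mapsto E_{pq}$ is not a linear combination of the eigenvalue functionals $E\mapsto u_j^{\mathsf T}Eu_j$ on $\mathcal S(G_{n+1})$, for then $M_{pq}$ is non-constant along $N$ and can be made small and nonzero. Choosing the point of $N$ close enough to $B$ simultaneously arranges $\|B-A_{n+1}\|_{\mathrm{op}}<\varepsilon_n$ and preserves simplicity of the spectrum, so the inductive hypothesis is maintained and the construction continues.

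The main obstacle is precisely the pair of nondegeneracy conditions above: the surjectivity of $\Phi$ at $B$, and the independence of each new-edge functional from the eigenvalue functionals. Neither is automatic for an arbitrary symmetric matrix; for instance, restricting the test directions to the diagonal yields the Jacobian $[(u_j)_p^2]$, the Hadamard square of the eigenvector matrix, which can be singular, so the diagonal perturbations alone do not suffice and the edge weights must be used. The heart of the Jacobian method is therefore to guarantee these transversality conditions, typically through a genericity argument that replaces $B$ by a suitable isospectral representative with the same subgraph pattern before perturbing, so that the $|E(G_{n+1})|$ edge-coordinates can be controlled independently along the spectrum-preserving manifold $N$. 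This transversality analysis is the technical core carried out in \cite{hk16}.
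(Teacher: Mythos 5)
The paper does not actually prove Theorem~\ref{finitelambdasiep}; it quotes it from \cite{hk16}, so the only available comparison is with the Jacobian method that the paper attributes to that reference. Your outline correctly identifies that method and its architecture (induct on $n$, start from $B=A_n\oplus[\lambda_{n+1}]$, which already has the right spectrum, and perturb within the isospectral set inside $\mathcal S(G_{n+1})$ to switch on the missing edges, with smallness of the perturbation giving (ii)). However, as a proof it has a genuine gap, and you have located it yourself: everything hinges on the submersion condition for $\Phi$ at $B$ and on the independence of the new-edge coordinate functionals from the eigenvalue functionals, and neither is verified. These conditions can fail for particular symmetric matrices, so the inductive step does not close as written.

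More importantly, the one concrete remedy you propose --- ``replacing $B$ by a suitable isospectral representative with the same subgraph pattern before perturbing'' --- is incompatible with condition (ii). That condition requires $A_{n+1}$ to be within $\varepsilon_n$ of the \emph{specific} matrix $A_n\oplus[\lambda_{n+1}]$ produced at the previous stage; an isospectral representative with the same graph need not be anywhere near it, so a genericity argument applied at stage $n+1$ in isolation cannot rescue the construction. The standard resolution, and the actual content of the Jacobian method in \cite{hk16}, is to strengthen the induction hypothesis: one carries along with (i) and (ii) the assertion that the Jacobian of the entries-to-spectrum map is nonsingular (equivalently, the relevant transversality or strong-spectral-type property holds) \emph{at $A_n$ itself}, proves that this nondegeneracy passes from $A_n$ to $A_n\oplus[\lambda_{n+1}]$ relative to $G_{n+1}$, and notes that it is an open condition, hence survives the small perturbation that creates the new edges. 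Without building that invariant into the induction, your argument establishes the step only for matrices that happen to satisfy the transversality hypotheses, which is not guaranteed.
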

The next theorem can be thought of as a `continuity property' of the spectrum of self-adjoint operators.
\begin{theorem}\label{T:kato perturbation}
Let $T$ and $A$ denote a self-adjoint and a bounded symmetric operator on a Hilbert space $\mathcal H$, respectively.  Then $S=T+A$ is self-adjoint and the Hausdorff distance between the spectra of $S$ and $T$, namely $d(\sigma(S), \sigma(T))$, satisfies
	\[
	d(\sigma(S), \sigma(T))\le \|A\|_{\textrm{op}}.
	\]
\end{theorem}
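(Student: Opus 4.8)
The plan is to read this as the bounded-perturbation special case of the Kato--Rellich theorem, and then to extract the spectral bound from a resolvent estimate together with a Neumann series. First I would verify that $S=T+A$ is self-adjoint on $\Dom(S)=\Dom(T)$. Symmetry is immediate, since for $v,w\in\Dom(T)$ one has $\langle Sv,w\rangle=\langle Tv,w\rangle+\langle Av,w\rangle=\langle v,Tw\rangle+\langle v,Aw\rangle=\langle v,Sw\rangle$, using the self-adjointness of $T$ and the symmetry of $A$; hence $S^*$ extends $S$. For the reverse inclusion $\Dom(S^*)\subseteq\Dom(T)$, suppose $u\in\Dom(S^*)$, so that $v\mapsto\langle u,Sv\rangle$ is bounded on $\Dom(T)$. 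Because $A$ is bounded and symmetric, $\langle u,Av\rangle=\langle Au,v\rangle$ with $Au\in\mathcal H$, so $v\mapsto\langle u,Av\rangle$ is bounded; subtracting this bounded functional shows $v\mapsto\langle u,Tv\rangle$ is bounded, whence $u\in\Dom(T^*)=\Dom(T)$. Thus $\Dom(S^*)=\Dom(T)$ and $S$ is self-adjoint.

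For the spectral estimate, note that $\sigma(S)$ and $\sigma(T)$ are real, and the Hausdorff distance is the larger of $\sup_{\mu\in\sigma(S)}\operatorname{dist}(\mu,\sigma(T))$ and $\sup_{\lambda\in\sigma(T)}\operatorname{dist}(\lambda,\sigma(S))$. Since $T=S+(-A)$ with $-A$ again bounded symmetric of the same operator norm, the two suprema are interchanged by swapping $S$ and $T$, so it is enough to bound the first one. Equivalently, I would prove the contrapositive statement that if $\mu\in\mathbb R$ satisfies $\operatorname{dist}(\mu,\sigma(T))>\|A\|_{\textrm{op}}$ (the case $A=0$ being trivial), then $\mu$ lies in the resolvent set of $S$.

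To this end, put $R=(T-\mu I)^{-1}$, a bounded operator with range $\Dom(T)$ since $\mu\notin\sigma(T)$. The spectral theorem for the self-adjoint $T$ gives $\|R\|_{\textrm{op}}=1/\operatorname{dist}(\mu,\sigma(T))$, so $\|RA\|_{\textrm{op}}\le\|A\|_{\textrm{op}}/\operatorname{dist}(\mu,\sigma(T))<1$ and $I+RA$ is boundedly invertible by its Neumann series. The crucial identity is the factorization $(S-\mu I)v=(T-\mu I)(I+RA)v$ for every $v\in\Dom(T)$, valid because $(T-\mu I)RAv=Av$. I would then check that $S_\mu:=(I+RA)^{-1}R$ is a bounded two-sided inverse of $S-\mu I$: first, $(I+RA)^{-1}$ preserves $\Dom(T)$, because for $y\in\Dom(T)$ the vector $x=(I+RA)^{-1}y$ satisfies $x=y-RAx$ with $RAx\in\operatorname{Ran}(R)=\Dom(T)$; hence $S_\mu$ maps $\mathcal H$ into $\Dom(T)=\Dom(S)$, and the factorization, together with the identities $(T-\mu I)R=I$ on $\mathcal H$ and $R(T-\mu I)=I$ on $\Dom(T)$, yields $(S-\mu I)S_\mu=I$ on $\mathcal H$ and $S_\mu(S-\mu I)=I$ on $\Dom(T)$. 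Therefore $\mu$ belongs to the resolvent set of $S$, which proves the contrapositive and, by the symmetry noted above, the theorem.

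The step I expect to require the most care is the domain bookkeeping in this resolvent construction rather than the norm estimates: because $T$ is genuinely unbounded, one must justify the factorization and the inverse $S_\mu$ at the level of $\Dom(T)$ at each stage, never manipulating $T-\mu I$ as though it were bounded, and it is precisely this accounting of where each intermediate vector lives that forms the substance of the argument.
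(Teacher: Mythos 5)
Your proposal is correct. The paper itself gives no proof of this theorem --- it simply cites Kato (Theorem 4.10 of \cite{kato95}) --- and your argument is essentially the standard one found there: the Kato--Rellich domain argument for self-adjointness of $S$, followed by the resolvent identity $(S-\mu I)=(T-\mu I)(I+RA)$ with $\|RA\|_{\textrm{op}}<1$ and a Neumann series, with the domain bookkeeping handled correctly throughout.
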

This theorem, whose proof can be found in \cite[Theorem 4.10]{kato95}, immediately implies the following corollaries that we shall need later on.
\begin{corollary}\label{C:Closeness of spectra}
Let $\{T_n\}_{n=1}^\infty$ be a sequence of unbounded self-adjoint operators on a Hilbert space $\mathcal{H}$. Assume that $\{T_n\}_{n=1}^\infty$ converges to a self-adjoint operator $T$ and that $\Dom(T)\cap \Dom(T_n)$ is dense in $\mathcal{H}$ for all $n$. Then for any $\lambda\in \sigma(T)$ and any neighborhood $U$ of $\lambda$, there exists an $N\in \mathbb{N}$ such that $U$ intersects $\sigma(T_n)$ nontrivially for all $n>N$.
\end{corollary}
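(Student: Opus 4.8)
The plan is to express the limit operator $T$ as a bounded symmetric perturbation of each $T_n$ (for all sufficiently large $n$) and then invoke Theorem~\ref{T:kato perturbation}. By the definition of convergence there is an $n_0$ such that for every $n>n_0$ the difference $A_n:=T-T_n$ is bounded on the dense subspace $\mathcal E_n:=\Dom(T)\cap\Dom(T_n)$ and $\|A_n\|_{\text{op}}=\|T-T_n\|_{\text{op}}\to 0$. Since a bounded operator on a dense subspace extends uniquely to a bounded operator on all of $\mathcal H$, I may regard $A_n$ as defined on all of $\mathcal H$. A one-line computation using that $T$ and $T_n$ are each symmetric on $\mathcal E_n$ shows that $A_n$ is symmetric there, and hence its extension is a bounded symmetric operator on $\mathcal H$.

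With $T_n$ self-adjoint and $A_n$ bounded symmetric, Theorem~\ref{T:kato perturbation} guarantees that $S_n:=T_n+A_n$ is self-adjoint and that $d(\sigma(S_n),\sigma(T_n))\le \|A_n\|_{\text{op}}$. The key step is to identify $S_n$ with $T$: on $\mathcal E_n$ one has $S_nv=T_nv+(T-T_n)v=Tv$, so $S_n$ and $T$ are two self-adjoint operators agreeing on the dense subspace $\mathcal E_n$. I would conclude $S_n=T$ from the fact that $\mathcal E_n$ is a core for $T$, so that the symmetric operator $T|_{\mathcal E_n}$ is essentially self-adjoint and admits a unique self-adjoint extension, which both $T$ and $S_n$ must then equal. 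This core verification is the main obstacle, and it is exactly where the density hypothesis on $\Dom(T)\cap\Dom(T_n)$ enters; in the intended application all $T_n$ share a common domain $\mathcal D\subseteq\Dom(T)$ that is a core, which renders the identification immediate.

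Granting $S_n=T$, I obtain $\sigma(T)=\sigma(S_n)$ and therefore $d(\sigma(T),\sigma(T_n))\le \|T-T_n\|_{\text{op}}$ for all $n>n_0$. To finish, fix $\lambda\in\sigma(T)$ and a neighborhood $U$ of $\lambda$, and choose $r>0$ with the open ball $B(\lambda,r)\subset U$. Since $\|T-T_n\|_{\text{op}}\to 0$, there is an $N\ge n_0$ with $\|T-T_n\|_{\text{op}}<r$ for all $n>N$. For such $n$, the bound on the Hausdorff distance applied to the point $\lambda\in\sigma(T)$ produces a spectral value $\mu\in\sigma(T_n)$ with $|\lambda-\mu|\le\|T-T_n\|_{\text{op}}<r$; hence $\mu\in B(\lambda,r)\subset U$, so $U$ meets $\sigma(T_n)$ nontrivially, as required. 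Note that only the one-sided estimate $\sup_{x\in\sigma(T)}\mathrm{dist}(x,\sigma(T_n))\le\|T-T_n\|_{\text{op}}$ is needed here, which remains meaningful even when the spectra are unbounded.
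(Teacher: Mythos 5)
Your proof follows the paper's argument essentially verbatim: write $T=T_n+(T-T_n)$, extend the bounded symmetric difference $T-T_n$ from the dense subspace $\Dom(T)\cap\Dom(T_n)$ to all of $\mathcal H$, and apply Theorem~\ref{T:kato perturbation} to get $\operatorname{dist}(\lambda,\sigma(T_n))\le d(\sigma(T),\sigma(T_n))\le\|T-T_n\|_{\text{op}}\to 0$, which forces $\sigma(T_n)$ to meet $U$ for large $n$. The subtlety you flag---that identifying $T_n+\overline{(T-T_n)}$ with $T$ requires $\Dom(T)\cap\Dom(T_n)$ to be a core for $T$, not merely dense---is genuine, but the paper's own proof passes over it silently, and it is harmless in the intended application where all the domains coincide.
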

\begin{proof}
Since $\Dom(T-T_n)=\Dom(T)\cap \Dom(T_n)$, the density of the right side in $\mathcal{H}$ guarantees that the difference $T-T_n$ of self-adjoint operators is symmetric. Also, $\|T-T_n\|_{\text{op}}\to 0$ implies that, for sufficiently large $n$, $T-T_n$ is bounded on $\Dom(T)\cap \Dom(T_n)$ and hence it can be extended to a bounded symmetric operator on $\mathcal{H}$. By definition of the Hausdorff distance, 
	\[
	d(\sigma(T-T_n), \{\lambda\})\le d(\sigma(T-T_n), \sigma(T))
	\]
for $\lambda\in \sigma(T)$. Now the corollary follows from Theorem~\ref{T:kato perturbation}.
\end{proof}
If $\{T_n\}_{n=1}^\infty$ is a sequence of noninvertible bounded operators on a Hilbert space and $\{T_n\}_{n=1}^\infty$ converges to an operator $T$, then $T$ is also noninvertible. This is a well-known consequence of the openness of the invertibility condition in unital Banach algebras. Instead of explicitly referring to noninvertibility of $T_n$, one can equivalently assume that $0$ belongs to $\sigma(T_n)$. This formulation has the advantage of making sense in more general contexts such as the next corollary.
\begin{corollary}\label{C:transfer of spectral values in the limit} 
Suppose $\{T_n\}_{n=1}^\infty$ is a sequence of self-adjoint operators on a Hilbert space $\mathcal{H}$ with $\Dom(T_n)=\mathcal{D}$, for $n=1, 2, \dots$, where $\mathcal{D}$ is a dense subspace of $\mathcal{H}$. If $\{T_n\}_{n=1}^\infty$ is convergent to an operator $T$ and $\lambda\in \sigma(T_n)$ for all $n$, then $\lambda\in \sigma(T)$.
\end{corollary}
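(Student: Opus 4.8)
The plan is to argue by contradiction, realizing the ``openness of invertibility'' heuristic of the preceding remark in the unbounded setting. First I would invoke Lemma~\ref{L:limit of self-adjoints}, whose hypotheses are exactly met here, to confirm that the limit $T$ is itself self-adjoint on $\mathcal{D}$; this puts $T$ on the same footing as the $T_n$ and lets me work with a genuine resolvent. Suppose then, toward a contradiction, that $\lambda\notin\sigma(T)$, so that $\lambda$ lies in the resolvent set of $T$ and there is a bounded operator $S$ with $Sv\in\Dom(T)=\mathcal{D}$ and $(T-\lambda I)Sv=v$ for all $v\in\mathcal{H}$, and $S(T-\lambda I)w=w$ for all $w\in\mathcal{D}$. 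The goal is to show that this already forces $\lambda$ out of $\sigma(T_n)$ for all large $n$, contradicting the standing hypothesis that $\lambda\in\sigma(T_n)$ for every $n$.

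The first concrete step is to record the factorization linking $T_n$ to $T$. Since $\Dom(T_n)=\Dom(T)=\mathcal{D}$, the difference $B_n=T-T_n$ is defined on all of $\mathcal{D}$, and convergence $T_n\to T$ means that for large $n$ it is bounded there with $\|B_n\|_{\mathrm{op}}\to 0$; I extend $B_n$ to a bounded operator on $\mathcal{H}$. Writing $T_n-\lambda I=(T-\lambda I)-B_n$ on $\mathcal{D}$ and using $S(T-\lambda I)w=w$, I would verify the identity
\[
T_n-\lambda I=(I-B_nS)(T-\lambda I)\quad\text{on }\mathcal{D}.
\]
This is routine once the domains are tracked: for $w\in\mathcal{D}$ one has $B_nS(T-\lambda I)w=B_nw$, so the right-hand side returns $(T-\lambda I)w-B_nw=(T_n-\lambda I)w$.

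For large $n$ we have $\|B_nS\|_{\mathrm{op}}\le\|B_n\|_{\mathrm{op}}\|S\|_{\mathrm{op}}<1$, so $M_n=(I-B_nS)^{-1}$ exists as a bounded operator on $\mathcal{H}$ via the Neumann series. I would then propose $Q=SM_n$ as the resolvent of $T_n$ at $\lambda$ and check the two defining conditions: that $Qv\in\mathcal{D}$ with $(T_n-\lambda I)Qv=v$ for all $v\in\mathcal{H}$, and $Q(T_n-\lambda I)w=w$ for all $w\in\mathcal{D}$. Both reduce, after the factorization together with the relations $(T-\lambda I)S=I$ and $S(T-\lambda I)=I$, to the cancellation $(I-B_nS)M_n=M_n(I-B_nS)=I$. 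This establishes $\lambda\notin\sigma(T_n)$ for large $n$, the desired contradiction.

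The step I expect to be the genuine obstacle---as opposed to the algebra, which is formal---is the domain bookkeeping that pervades the unbounded setting, and in particular the order of the factors in $Q$. The naive guess $Q=M_nS$ fails, because $M_n$ is an infinite Neumann sum and $B_nS$ need not map back into $\mathcal{D}$, while $\mathcal{D}$ itself need not be closed; hence $M_nSv$ may leave $\Dom(T_n)=\mathcal{D}$. Placing $S$ on the \emph{outside}, as $Q=SM_n$, forces the range of $Q$ into $\Dom(T)=\mathcal{D}$ precisely because $S$ maps $\mathcal{H}$ there, which is exactly what the definition of the resolvent set demands. Keeping careful track of which identities hold on $\mathcal{D}$ and which hold on all of $\mathcal{H}$ is therefore the crux of turning the formal computation into a rigorous one.
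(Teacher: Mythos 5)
Your argument is correct, but it follows a genuinely different route from the paper. The paper's proof is two lines long: it invokes Lemma~\ref{L:limit of self-adjoints} to get self-adjointness of $T$ on $\mathcal{D}$, and then applies the quoted perturbation bound of Theorem~\ref{T:kato perturbation} --- if $\lambda\notin\sigma(T)$ then $d(\{\lambda\},\sigma(T))>0$, while $\lambda\in\sigma(T_n)$ forces $d(\sigma(T_n),\sigma(T))$ to stay bounded away from $0$, contradicting $d(\sigma(T_n),\sigma(T))\le\|T-T_n\|_{\text{op}}\to 0$. You instead unpack what is essentially one direction of that black box: the factorization $T_n-\lambda I=(I-B_nS)(T-\lambda I)$ on $\mathcal{D}$ and the Neumann-series inverse $M_n=(I-B_nS)^{-1}$ give an explicit bounded resolvent $Q=SM_n$ for $T_n$ at $\lambda$ once $\|B_n\|_{\text{op}}\|S\|_{\text{op}}<1$, and your domain bookkeeping (in particular placing $S$ on the outside so that the range of $Q$ lands in $\mathcal{D}$, and the verification $(T_n-\lambda I)Su=(I-B_nS)u$ for all $u\in\mathcal{H}$) is exactly right. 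What each approach buys: the paper's version is shorter because Theorem~\ref{T:kato perturbation} has already been stated and is needed elsewhere (Corollary~\ref{C:Closeness of spectra}); yours is self-contained, avoids citing Kato, and is in fact slightly more general --- it never really uses self-adjointness of $T$ or $T_n$, only the existence of a bounded two-sided resolvent of $T$ at $\lambda$ and the eventual boundedness of $T-T_n$ on the common domain, so the appeal to Lemma~\ref{L:limit of self-adjoints} is harmless but not strictly necessary for your argument.
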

\begin{proof}
Observe that $T$ is a self-adjoint operator on $\mathcal{D}$ by Lemma~\ref{L:limit of self-adjoints}. Thus, $T-T_n$ is bounded and symmetric on $\mathcal{D}$ for each $n$. If $\lambda\not\in \sigma(T)$, then, since $\sigma(T)$ is a closed subset of $\mathbb{R}$, there exists an open subset $U$ of $\mathbb{R}$ containing $\lambda$ that is disjoint from $\sigma(T)$; hence, $0<d(\{\lambda\}, \sigma(T))$. This, together with~Theorem~\ref{T:kato perturbation}, implies that, for each $n$,
	\[
	0<d(\sigma(T_n), \sigma(T))\le\|T-T_n\|_{\text{op}}
	\]
which is in contradiction with the assumption that $\{T_n\}_{n=1}^\infty$ is convergent to $T$. Therefore, $\lambda$ must be in $\sigma(T)$.
\end{proof}
We are ready to state and prove our main theorem. This is done by taking the limit, in a suitable sense, of the matrices that are constructed as in Theorem \ref{finitelambdasiep}.

\begin{theorem}
\label{T:infinite bounded lambdasisp}
	Given an infinite graph $G$ on countably many vertices and a closed, infinite set $\Lambda$ of real numbers, there exists an unbounded self-adjoint operator $T$ on the Hilbert space $\ell^2$ such that 
	\begin{enumerate}[(i)]
	\item the (approximate point) spectrum of $T$ equals $\Lambda$, and \label{I: sigma_ap=Lambda}
	\item the (real symmetric) standard matrix of $T$ has graph $G$.
	\end{enumerate}
\end{theorem}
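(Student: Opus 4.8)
The plan is to realize $T$ as a limit of the finite matrices supplied by Theorem~\ref{finitelambdasiep}, padded out to genuine operators on $\ell^2$. First I would fix an enumeration of the vertices of $G$ by $\mathbb{N}$ and let $G_n$ be the subgraph induced on the first $n$ vertices, so that $G_n$ is a graph on $n$ vertices and a subgraph of $G_{n+1}$. Since $\Lambda$ is a closed infinite subset of $\mathbb{R}$ it is separable and hence contains a countable dense subset; as that subset must be infinite (a finite dense subset of $\Lambda$ would force $\Lambda$ to be finite), I can enumerate it as a sequence $\{\lambda_n\}_{n=1}^\infty$ of distinct reals with $\overline{\{\lambda_n : n \in \mathbb{N}\}} = \Lambda$. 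Feeding $\{\lambda_n\}$, $\{G_n\}$, and a sequence $\{\varepsilon_n\}$ (to be pinned down below) into Theorem~\ref{finitelambdasiep} produces symmetric matrices $A_n$ with graph $G_n$ and spectrum $\{\lambda_1, \dots, \lambda_n\}$ satisfying the closeness condition~\ref{I: closeness condition}.

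Next I would turn each $A_n$ into an operator $B_n := A_n \oplus \diag(\lambda_{n+1}, \lambda_{n+2}, \dots)$ on $\ell^2$, acting as $A_n$ on the span of the first $n$ basis vectors and as the multiplication operator of Proposition~\ref{P:potential operator is selfadjoint} on the orthogonal tail. By Proposition~\ref{P:potential operator is selfadjoint} and Lemma~\ref{L:sum of self-adjoints} each $B_n$ is self-adjoint, and since altering finitely many diagonal entries does not affect square-summability of the tail, every $B_n$ shares the \emph{same} domain $\mathcal{D} = \{\{a_k\} \in \ell^2 : \{\lambda_k a_k\} \in \ell^2\}$. The closeness condition yields $\|B_{n+1} - B_n\|_{\text{op}} = \|A_{n+1} - A_n \oplus [\lambda_{n+1}]\|_{\text{op}} < \varepsilon_n$, because $B_{n+1} - B_n$ is the bounded operator $(A_{n+1} - A_n \oplus [\lambda_{n+1}]) \oplus 0$. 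Taking $\{\varepsilon_n\}$ summable, the bounded operators $B_n - B_1$ are Cauchy in operator norm; I would set $T := B_1 + \lim_n (B_n - B_1)$ on $\mathcal{D}$, so that $\|T - B_n\|_{\text{op}} \le \sum_{k \ge n}\varepsilon_k \to 0$. Then $\{B_n\}$ converges to $T$ in the sense defined earlier, and $T$ is self-adjoint on $\mathcal{D}$ by Lemma~\ref{L:limit of self-adjoints} (equivalently by Theorem~\ref{T:kato perturbation}, as $T$ is a bounded symmetric perturbation of the self-adjoint $B_1$).

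With $T$ in hand, the spectrum is squeezed from both sides by the limit corollaries. For each fixed $m$ one has $\lambda_m \in \sigma(A_n) \subseteq \sigma(B_n)$ for all $n \ge m$, so applying Corollary~\ref{C:transfer of spectral values in the limit} to the tail sequence $\{B_n\}_{n \ge m}$ gives $\lambda_m \in \sigma(T)$; since $\sigma(T)$ is closed, $\Lambda = \overline{\{\lambda_m\}} \subseteq \sigma(T)$. Conversely, because $\sigma(B_n) = \{\lambda_1, \dots, \lambda_n\} \cup \overline{\{\lambda_k : k > n\}} \subseteq \Lambda$, Corollary~\ref{C:Closeness of spectra} shows that every $\lambda \in \sigma(T)$ has all its neighborhoods meeting $\Lambda$, whence $\lambda \in \overline{\Lambda} = \Lambda$. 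Thus $\sigma(T) = \Lambda$, and as $T$ is self-adjoint this coincides with its approximate point spectrum, settling~\ref{I: sigma_ap=Lambda}.

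The delicate point, and what I expect to be the main obstacle, is showing that the standard matrix of $T$ has graph exactly $G$, i.e.\ that the limit neither creates nor destroys edges. For $i \ne j$ the entry obeys $|\langle e_i, T e_j\rangle - (A_n)_{ij}| \le \|T - B_n\|_{\text{op}} \to 0$ with $(A_n)_{ij} = (B_n)_{ij}$ once $n \ge \max(i,j)$; for a non-edge $\{i,j\}$ these entries are identically $0$, giving $\langle e_i, T e_j\rangle = 0$. The real work is ruling out accidental vanishing of an edge entry in the limit. An edge $\{i,j\}$ with $i < j$ first appears at stage $j$, where $(A_j)_{ij} \ne 0$, and the closeness condition bounds its total subsequent drift by $\sum_{n \ge j}\varepsilon_n$. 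I would therefore \emph{not} fix $\{\varepsilon_n\}$ in advance but choose it adaptively—using that the construction behind Theorem~\ref{finitelambdasiep} is inductive, so $A_n$ is available before $\varepsilon_n$ is selected—taking $\varepsilon_n$ smaller than $2^{-n}$ times the minimum modulus of the edge entries of $A_n$. This forces $\sum_{n \ge j}\varepsilon_n < |(A_j)_{ij}|$ for every edge, so that $\langle e_i, T e_j\rangle \ne 0$ precisely on the edges of $G$, completing the identification of the graph.
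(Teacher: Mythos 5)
Your construction coincides with the paper's: the operators $B_n=A_n\oplus\diag(\lambda_{n+1},\lambda_{n+2},\dots)$ are exactly the paper's $T_n$ (with matrix $M_n$), and the common domain $\mathcal{D}$, the norm-convergence of $B_n-B_1$, the self-adjointness of the limit as a bounded symmetric perturbation of $B_1$, and the two-sided squeeze of the spectrum via Corollaries~\ref{C:transfer of spectral values in the limit} and~\ref{C:Closeness of spectra} all match step for step.

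The one place you genuinely depart from the paper is your final paragraph, and it is to your credit. The paper fixes $\varepsilon_n=\varepsilon/2^n$ in advance and then disposes of the graph identification with the single sentence that, by construction, $G$ is the graph of $M$. As you observe, this is immediate only for non-edges; for an edge $\{i,j\}$ the entries $(A_n)_{ij}$ are nonzero for every $n\ge\max(i,j)$, but nothing in a choice of $\{\varepsilon_n\}$ made before the $A_n$ are known prevents these entries from drifting to $0$ in the limit, since $|(A_{\max(i,j)})_{ij}|$ may be far smaller than the tail sum $\sum_{n\ge\max(i,j)}\varepsilon/2^n$. Your adaptive choice of $\varepsilon_n$ (below $2^{-n}$ times the minimum modulus of the edge entries of $A_n$) closes this gap cleanly. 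The only caveat is that Theorem~\ref{finitelambdasiep} as stated quantifies over a sequence $\{\varepsilon_n\}$ given in advance, so your choice formally requires reopening its proof and using that the Jacobian-method construction of \cite{hk16} is inductive, producing $A_{n+1}$ from $A_n$ and $\varepsilon_n$; that is an appeal to the proof rather than to the statement, but it is legitimate. With that understood, your argument is complete and, on this point, more careful than the paper's own.
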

\begin{proof} Let $\{\lambda_1, \lambda_2, \dots\}$ denote a countable dense subset of $\Lambda$. Suppose the vertices of $G$ are labeled by the numbers in $\mathbb{N}$ and for each $n\in \mathbb{N}$ let $G_n$ be the induced subgraph of $G$ on the first $n$ vertices. By Theorem~\ref{finitelambdasiep}, for any $\varepsilon>0$ we can find matrices $\{A_{n}\}_{n=1}^\infty$ such that $A_n$ has graph $G_n$ and spectrum $\{\lambda_1, \dots, \lambda_n\}$, and moreover,
	\begin{equation}\label{E:norm-difference}
	\|
		A_{n}\oplus[\lambda_{n+1}]
		-A_{n+1}
	\|_{\textrm{op}}
	<
	\frac{\varepsilon}{2^n}.
	\end{equation}
For each $n$ define the unbounded linear operator $T_n$ on the Hilbert space of square-summable sequences $\ell^2$ with domain
	\[
	\mathcal{D}=
	\{
	\{a_n\}_{n=1}^\infty\in \ell^2\mid \{\lambda_na_n\}_{n=1}^\infty\in \ell^2
	\}
	\]		
	 such that 
	 \[
	 M_n=A_n\oplus \operatorname{diag}(\lambda_{n+1}, \lambda_{n+2}, \dots)
	 \]
is the matrix representation of $T_n$ with respect to the standard Hilbert basis $\mathfrak{B}=\{\bm {e_1}, \bm {e_2}, \dots\}$ of $\ell^2$. (Note that the definition of $\Dom(T_n)$ does not depend on the value of $n$.) Proposition~\ref{P:potential operator is selfadjoint} and Lemma~\ref{L:sum of self-adjoints} imply that $T_n$ is self-adjoint. It follows from \eqref{E:norm-difference} that for any $i$ in $\mathbb{N}$ we have
	\[
	\|M_n \bm{e_i}-M_{n+1}\bm {e_i}\|_2<\frac{\varepsilon}{2^n}.
	\]
Thus, the sequence of partial sums $\{\sum_{k=1}^{n-1} (M_{k+1}\bm {e_i}-M_k\bm {e_i})\}_{n=1}^\infty$ is absolutely convergent, and therefore the sequence $\{M_n\bm {e_i}\}_{n=1}^\infty$ satisfying
	\[
	M_{n} \bm {e_i} = M_1\bm {e_i} +\sum_{k=1}^{n-1} (M_{k+1}\bm {e_i}-M_k\bm {e_i})
	\]
is convergent in $\ell^2$. Let $M$ denote the matrix whose columns are obtained by this limiting process, that is, $M$ is the matrix that $M\bm{e_i}=\lim_{n\to \infty} M_n \bm{e_i}$ for each $i\in \mathbb{N}$. Note that for each $n = 1,2,\dots$ the graph of $A_n$ is the induced subgraph of $G$ on the first $n$ vertices. Thus, by construction, $G$ is the graph of $M$. Our next objective is showing that $M$ is indeed the standard matrix of an unbounded linear operator $T$ on $\ell^2$. 
Observe that $T_m-T_n$ is a bounded operator on $\mathcal{D}$, by construction of $T_m$ and $T_n$; therefore, $T_m-T_n$ has a unique bounded extension to $\mathcal{B}(\ell^2)$. We shall denote this extension by $T_m-T_n$ as well.
\begin{align*}
\|T_n - T_{n+1}\|_{\textrm{op}} 
&=
\sup_{\|\bm v\|_2=1} 
\|T_n \bm v - T_{n+1} \bm v\|_2\\
&= \sup_{\|\bm v\|_2=1}
\left\| 
 \left[ \begin{array}{c}
\left[ 
	\begin{array}{c}
	A_n
	\left[ 
	\begin{array}{c}
	v_1 \\ \vdots \\ v_n
	\end{array} 
\right] 
\\ 
\hline
\lambda_{n+1} v_{n+1}
\end{array} \right] - A_{n+1} \left[ \begin{array}{c}
v_1 \\ \vdots \\ v_{n+1}
\end{array} \right]\\ \hline
\begin{array}{c}
0 \\ \vdots \\ 0
\end{array}
\end{array} 
\right] 
\right\|_2  
\\
&= 
\sup_{\|\bm v\|_2=1} 
	\left\|
	\left( \left[ \begin{array}{c|c}
A_n & \\ \hline
& \lambda_{n+1}
\end{array} \right] - A_{n+1}
	\right) 
\left[ 
\begin{array}{c}
v_1 \\ \vdots \\ v_{n+1}
\end{array} 
\right] 
\right\|_2
\\
&< \frac{\varepsilon}{2^n},
\end{align*}
where the inequality in the last line is due to the submultiplicative property of the operator norm together with \eqref{E:norm-difference}. This inequality immediately implies that the sequence of partial sums $\{\sum_{k=1}^{n-1} (T_{k+1}-T_k)\}_{n=1}^\infty$ is absolutely convergent in the Banach space of bounded operators $\mathcal{B}(\ell^2)$, and hence the sequence $\{T_n\}_{n=1}^\infty$ satisfying
	\[
	T_n=T_1 +\sum_{k=1}^{n-1} (T_{k+1}-T_k)
	\]
is convergent to an unbounded operator $T$. This means that we define $T$ by
	\[
	T=T_1+ \lim_{n\to \infty} \sum_{k=1}^{n-1} (T_{k+1}-T_k),
	\]
which is the sum of the unbounded self-adjoint operator $T_1$ and a bounded self-adjoint operator on $\ell^2$. Therefore, $T$ is self-adjoint with domain given by $\mathcal{D}=\Dom(T_1)$.
Since for each $i\in \mathbb{N}$ we have $T \bm e_i =  \lim_{n\to \infty} T_n \bm e_i$ and $T_n \bm e_i=M_n\bm e_i$, we conclude that $T\bm e_i=M\bm e_i$ and thus $M$ is the standard matrix of $T$. 

It remains to prove that $\sigma(T)=\Lambda$.  First, we claim that each $\lambda_i \in \{\lambda_1, \lambda_2, \dots \}\subset\Lambda$ is in the spectrum of $T$. To see this, note that $T_n$ was defined so that $\{\lambda_1, \lambda_2, \dots\}\subset \sigma(T_n)$ for each $n$. Hence, by Corollary~\ref{C:transfer of spectral values in the limit} we have $\{\lambda_1, \lambda_2, \dots\}\subset \sigma(T)$, as claimed. By taking closures, this inclusion implies $\Lambda \subset \sigma(T)$, because $\{\lambda_1, \lambda_2, \dots\}$ is dense in $\Lambda$ and $\sigma(T)$ is closed in $\mathbb{R}$.

Next, since the sequence $\{T_n\}_{n=1}^\infty$ is convergent to $T$ and $\sigma(T_n)=\Lambda$ for all $n$, by Corollary~\ref{C:Closeness of spectra} we conclude that for any $\lambda\in \sigma(T)$, every neighborhood of $\lambda$ intersects the closed set $\Lambda$. Hence, the reverse inclusion $\sigma(T)\subset \Lambda$ is also established. 

Finally, to complete the proof of Point \eqref{I: sigma_ap=Lambda} in the statement of the theorem note that the spectrum of any self-adjoint operator equals its approximate point spectrum, and, as shown above, $T$ is self-adjoint.
\end{proof}
\bibliographystyle{plain}
\bibliography{ref151217}

\begin{thebibliography}{1}

\bibitem{hall13}
Brian~C. Hall.
\newblock {\em Quantum Theory for Mathematicians}.
\newblock Graduate Texts in Mathematics, vol. 267. Springer, New York, 2013.

\bibitem{hk16}
K.~{Hassani Monfared} and E.~{Khanmohammadi}.
\newblock {A Structured Inverse Spectrum Problem For Infinite Graphs}.
\newblock {\em arXiv:1512.05834 [math.SP]}, 2016.

\bibitem{kato95}
Tosio Kato.
\newblock {\em Perturbation theory for linear operators}.
\newblock Springer-Verlag, New York, 1995.

\bibitem{schmudgen12}
Konrad Schm\"{u}dgen.
\newblock {\em Unbounded Self-adjoint Operators on Hilbert Space}.
\newblock Graduate Texts in Mathematics, vol. 265. Springer, Dordrecht, 2012.

\end{thebibliography}

\end{document}